\documentclass{amsart}
\usepackage{graphicx}
\usepackage{amsfonts,amsmath,amssymb,eucal}
\usepackage[all]{xy}
\usepackage{amsthm}
\usepackage{amsfonts}
\usepackage{amsmath}
\usepackage{graphicx}
\usepackage{url}

\newtheorem{thm}{Theorem}[section]
\newtheorem{lem}[thm]{Lemma}
\newtheorem{pro}[thm]{Proposition}
\newtheorem{cor}[thm]{Corollary}

\theoremstyle{defn}

\begin{document}
\title{On the cyclic 3-manifold covers of the type Surface $\times$ $\mathbb{R}$}
\author{Jordan Sahattchieve}
\maketitle

\begin{abstract}
This article contains a proof of the fact that, under certain mild technical conditions, the action of the automorphism group of a cyclic 3-manifold cover of the type $S\times\mathbb{R}$, where $S$ is a compact surface, yields a compact quotient.  This result is then immediately applied to extend a theorem in \cite{Moon} on the fiberings over $\mathbb{S}^1$ of certain compact 3-manifolds which are torus sums.  As a corollary, I prove the validity of the conditional main theorem in \cite{Sah} and \cite{SahErr}.  This paper also furnishes a proof of the irreducibility of the summands of compact 3-manifolds which are torus sums and irreducible.\\\\
MSC Subject Classifications: 57M10, 57M60, 57M07, 57K30.\\\\\\
\centering{\textit{In memoriam G. Peter Scott, November 27, 1944 - September 19, 2023}}
\end{abstract}

\begin{section}{Introduction}
Initially, the purpose of this article was to provide proofs of certain mathematical claims in \cite{Moon}, which I discovered were missing - this being necessary for the completeness of the arguments justifying the claims in \cite{Sah} and \cite{SahErr}.  However, it was recently made apparent to me that in light of the recent seminal work by Agol, Wise, and the ensuing results due to Przytyzcki, and Hagen, who prove, among other things, that large classes of manifolds fiber over the circle, the results in \cite{Elkalla}, \cite{Moon}, and \cite{Sah} need to be put into proper context motivating the work contained herein.  Therefore, I will attempt to provide some background on relevant results in the area, with the understanding that this is not meant to be a comprehensive survey on the subject.  For such a survey of recent as well as classical results in 3-manifold topology, see \cite{AFW}.
	
Arguably, the first fibration theorem for compact 3-manifolds was due to Stallings, see \cite{Stallings}:
	
\begin{thm}\label{StallingsT}\textbf{(Stallings, 1961)} Let $M$ be a compact irreducible 3-manifold.  Suppose that there is a surjective homomorphism $\pi_1(M)\rightarrow\mathbb{Z}$ whose kernel $G\neq 1$ is finitely generated and not of order $2$.  Then, $M$ fibers over $\mathbb{S}$ with fiber a compact surface $F$ whose fundamental group is isomorphic to $G$.
\end{thm}
	
There are two important things to take note of in Theorem \ref{StallingsT}: (a) it provides sufficient conditions on the fundamental group of the compact and irreducible manifold $M$, under which $M$ fibers over $\mathbb{S}$, and (b) it establishes the existence of a particular fibering of $M$ - the one whose fundamental group is $G$.  One might feel that, in a certain sense, this is the right fibering of any compact manifold whose fundamental group possesses a normal subgroup $G$, which is motivated by the well-known fact that for any compact 3-manifold which is a fiber fundle over $\mathbb{S}$ with fiber a comapct surface $F$, we have $G=\pi_1(F)\triangleleft\pi_1(M)$.  It is also well-known that a manifold may fiber over the circle in infinitely many ways.  In this context, we note that Theorem \ref{StallingsT} yields a particular fibering having the right - so to speak - fundamental group carried by the fiber surface detected in the fundamental group of $M$.
	
We further note that there are no conditions on the topology or geometry of $M$ in the hypothesis of Theorem \ref{StallingsT} other than compactness and irreducibility.  The reader may be interested to learn that Theorem 1.5 of \cite{Elkalla} yields, as an almost immediate consequence, the fact that, under the hypothesis of Theorem \ref{StallingsT}, the Poincare associate of $M$, $\widehat{M}$, must be irreducible - see Theorem 5.3(a) in \cite{Sah}, \cite{SahErr} and Proposition \ref{irredOfMh} below.
	
The second major result about fiberings over the circle is a theorem of Hempel and Jaco:
	
\begin{thm}\label{Hemel-Jaco}\textbf{(Hempel, Jaco, 1972)}  Let $M$ be a compact, possibly nonorientable manifold, which contains no 2-sided projective plane.  Suppose that there is an exact sequence $1\longrightarrow N\longrightarrow \pi_1(M) \longrightarrow Q \longrightarrow 1$, where $N\neq 1$ is finitely presented, and the quotient $Q$ is infinite.  If $N\neq\mathbb{Z}$, then $\widehat{M}=M_1\#\Sigma$, where $\Sigma$ is a homotopy 3-sphere, and $M_1$ is either: (i) a fiber bundle over $\mathbb{S}$ with fiber a compact surface $F$, or (ii) the union of two twisted $I$-bundles over a compact 2-manifold $F$ which meet in the corresponding $0$-sphere bundles.
		
In either case, $N$ is a subgroup of finite index of $\pi_1(F)$ and $Q$ is an extension of a finite group by either $\mathbb{Z}$ (case (i)) or $\mathbb{Z}_1*\mathbb{Z}_2$ (case (ii)).
\end{thm}
	
Theorem \ref{Hemel-Jaco} long predated both the proof of the Geometrization Theorem by Perelman and a result by Scott proving the coherence of 3-manifold groups.  Because of these theorems, we can now replace \textit{finitely presented} by \textit{finitely generated}, and omit any mention of the homotopy 3-sphere $\Sigma$, as no such, other than $\mathbb{S}^3$, exists.  We again notice that $N$ picks out a preferred fibering of $M$ - a fibering for which the fundamental group of the fiber contains $N$ as a subgroup of finite index.
	
In \cite{Moon}, Moon replaces the requirements that $N$ be finitely generated and that there exist a sujection onto an infinite group with the hypothesis that $N$ is contained in a finitely generated subgroup $U$ of infinite index in $\pi_1(M)$, and that $M$ is geometric, while, in \cite{Elkalla}, Elkalla relaxes also the assumption of normality of $N$ to subnormality plus a certain residual finiteness condition on $\pi_1(M)$, but without any assumptions on the topology or geometry of $M$ other than compactness, connectedness, and $P^2$-irreducibility.  The conclusion in both \cite{Moon} and \cite{Elkalla} is that a finite cover of $M$ fibers over the cirlce and, further, that for the fibration produced, the fundamental group of the fiber is commensurable with $U$.  Thus, the results mentioned so far identify a particular fibering of $M$ - a fibering in which we have certain control over the fiber.
	
In \cite{Thurston1}, Thurston posed 24 research problems setting directions for future work.  Problem 18 in the list conjectures that every hyperbolic manifold has a finite cover which fibers over the circle.  This conjecture became a theorem for compact manifolds in 2012 due to deep results of Agol and Wise.  I refer here to two hard theorems: Agol's fibering criterion, and Wise's results on embedding the fundamental group of a closed hyperbolic manifold into a right-angled Artin group, which I list below:
	
\begin{thm}(Agol, 2008)  Let $M$ be a compact irreducible orientable 3-manifold with $\chi(M)=0$.  If $\pi_1(M)$ is a subgroup of a right-angle Coxeter group of a right-angled Artin group, then $M$ virtually fibers.
\end{thm}
	
\begin{thm}(Wise, 2012)  If $M$ is a non-closed hyperbolic manifold of finite volume, then $\pi_1(M)$ is virtually compact special.
\end{thm}
	
\begin{thm}(Wise, 2012)  If $M$ is a closed hyperbolic manifold that contains a geometrically finite surface, then $\pi_1(M)$ is virtually compact special.
\end{thm}
	
These results coupled with the theorem of Kahn-Markovic in \cite{KahnMarkovic} showing the existence of geometrically finite surfaces in closed hyperbolic manifolds, and the results of Wise on embedding virtually compact special groups into right-angled Artin groups, yield a fibering theorem for hyperbolic 3-manifolds.  More results along these lines have been obtained proving that other classes of manifolds, such as graph manifolds with boundary, see \cite{Przy-Wise}, or closed nonpositively curved graph manifolds, see \cite{Svetlov} , virtually fiber over the cirlce, and the interested reader is again referred to \cite{AFW}.  It is, however, important to note that not all compact 3-manifolds have finite covers which fiber over the circle.  Apart from the trivial examples which are finitely covered by $\mathbb{S}^3$, we know that certain easily constructed closed graph manifolds do not virtually fiber over the circle.  The simplest such example is the torus sum of two trivial hyperbolic surface bundles over $\mathbb{S}^1$, where the gluing map does not respect certain natural bases for the first homology groups of the boundary tori, for more details see \cite{Leeb} and \cite{Svetlov}.  One can easily build closed graph manifolds which do not virtually fiber over the circle and have an arbitrary number of JSJ components, again see \cite{Leeb}.  However, in light of \cite{Leeb} and the combined result, due to Agol, Liu, Przytycki, and Wise, listed as Theorem 5.27 in \cite{AFW}, any irreducible manifold which does not virtually fiber over the cirlce must, in fact, be a closed graph manifold - not a nonpositively curved such, or is finitely covered by $\mathbb{S}^3$.
	
On the other hand, in light of Theorem 1.5 in \cite{Elkalla}, the Stallings type theorems in \cite{HempelJaco}, \cite{Elkalla}, and \cite{Sah}, as well as Stallings' own Theorem \ref{StallingsT}, all show that a compact 3-manifold virtually fibers over the circle under a hypothesis on the fundamental group only, without the conditions of asphericity, a vanishing Euler number, or ruling out a class of manifolds altogether. 
	
I now come to the immediate motivation for the work below.  The main theorem in \cite{Sah} and its corrigendum \cite{SahErr} extends the work in \cite{Moon} in light of the positive solution to the geometrization conjecture by Perelman.  Unfortunately, \cite{Moon} contains two rather significant gaps: (i) its main Theorem 2.10, while stated for compact manifolds, was only proved for closed such, and (ii), even in the case of closed manifolds, the claim in Theorems 2.9 and 2.10 which identifies the fundamental group of the fiber surface has not been proved.
	
Because in \cite{Sah} and its corrigendum I used Moon's results for torus sums to make an inductive argument using the Geometrization Theorem of Perelman, it was essential to have a correct proof for the case of compact manifolds with nonempty boundary.  Also, due to the aforementioned reasons concerning the importance of the claim in \cite{Moon} concerning the fundamental group of the fiber, especially in light of the powerful results arising from the work of Agol and Wise, I have also supplied proof of this correct but unsubstantiated claim made by Moon.  Finally, while I understand that the technical result in Theorem \ref{MainThm} is not specific to dimension 3, I have no need for its higher dimensional verbatim generalizations, and have therefore left it stated for 3-manifolds only, also due to the relative importance of its immediate corollary - Theorem \ref{BoundaryCasesSettled}, which is a result about 3-manifolds only.
	
\end{section}

\begin{section}{The Main Result}
\begin{thm}\label{MainThm}Let $M$ be the $3$-manifold $S\times\mathbb{R}$, which is the trivial line bundle over the compact surface $S$ with or without boundary, and let $M$ be orientable.  Suppose that there is a covering action of the group $G$ on $M$ and a Riemannian metric on $M$ for which this action is by isometries, and suppose that $G$ contains a subgroup $\left\langle \gamma\right\rangle <G$, with $\gamma\in G$, isomorphic to the infinite cyclic group $\mathbb{Z}$.  If $\gamma^n S_0\cap S_0=\emptyset$, for all $n>n_0$, where $S_0$ is any horizontal section $S_0=S\times\left\lbrace t_0\right\rbrace$ with $t_0\in\mathbb{R}$, and $n_0$ is a positive integer, then the quotient of $M$ by the action of $G$ is compact. 
\end{thm}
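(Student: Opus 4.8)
The plan is to reduce the problem to an action of $\langle\gamma\rangle$ alone, normalize $\gamma$ into a \emph{height--increasing} homeomorphism, and then build an explicit compact fundamental domain by stacking the $\gamma$--translates of a single horizontal section. First the reductions. The quotient map $M\to M/G$ factors through $M\to M/\langle\gamma\rangle$, so it is enough to make $M/\langle\gamma\rangle$ compact; and since $\langle\gamma^{\,n_0+1}\rangle$ has finite index in $\langle\gamma\rangle$, the covering $M/\langle\gamma^{\,n_0+1}\rangle\to M/\langle\gamma\rangle$ is finite and it suffices to treat $\gamma^{\,n_0+1}$. Replacing $\gamma$ by $\gamma^{\,n_0+1}$ I may therefore assume $\gamma^{k}(S\times\{t\})\cap(S\times\{t\})=\emptyset$ for every $t\in\mathbb{R}$ and every $k\neq 0$ (using the stated hypothesis, uniformly in the section, together with the obvious symmetry in $n\mapsto -n$). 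Writing $h\colon M=S\times\mathbb{R}\to\mathbb{R}$ for the projection, this says precisely that $h(\gamma^{k}x)\neq h(x)$ for all $x\in M$ and $k\neq 0$; in particular $x\mapsto h(\gamma x)-h(x)$ is continuous and nowhere zero, so it has constant sign on the connected space $M$, and after possibly inverting $\gamma$ I may assume $h(\gamma x)>h(x)$ for every $x$. (I also take $S$ connected; in general one first passes to a finite--index subgroup of $\langle\gamma\rangle$ fixing each component of $M$ and argues componentwise.)

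Next I would prove that translates escape along the $\mathbb{R}$--direction. For each $x$ the reals $h(\gamma^{n}x)$ are strictly increasing in $n$, and they must tend to $+\infty$ as $n\to+\infty$ and to $-\infty$ as $n\to-\infty$: otherwise the orbit of $x$ would lie in a slab $S\times[a,b]$, which is compact because $S$ is, and would hence accumulate in $M$, contradicting proper discontinuity of the action. Using in addition that each $\gamma^{n}$ is an isometry of the given metric (so that $\{\gamma^{n}\}$ is equicontinuous), the same accumulation argument yields the uniform statement I really need: for the compact surfaces $\Sigma_n:=\gamma^{n}(S\times\{t_0\})$, with $\mu_n:=\min h|_{\Sigma_n}$ and $\nu_n:=\max h|_{\Sigma_n}$, one has $\mu_n\to+\infty$ as $n\to+\infty$ and $\nu_n\to-\infty$ as $n\to-\infty$. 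Indeed, were $\mu_{n_j}$ bounded along some $n_j\to\infty$, pick $w_j\in\Sigma_{n_j}$ with $h(w_j)=\mu_{n_j}$ and $z_j\in S\times\{t_0\}$ with $\gamma^{n_j}z_j=w_j$; after passing to a subsequence $z_j\to z$ in the compact surface $S\times\{t_0\}$, and then $d(w_j,\gamma^{n_j}z)=d(z_j,z)\to 0$ forces $\gamma^{n_j}z$ to converge, again contradicting proper discontinuity.

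Then I would assemble the $\Sigma_n$ into a stack. Each $\Sigma_n$ is the image under the homeomorphism $\gamma^{n}$ of the separating surface $S\times\{t_0\}$, hence separates $M$ into two pieces, each a homeomorphic image of $S\times(t_0,\infty)$ or $S\times(-\infty,t_0)$ and therefore not contained in any compact slab; since $\Sigma_n$ is compact, the connected set $S\times(-\infty,\mu_n)$ lies wholly in one complementary component $P_n^{-}$, and $S\times(\nu_n,+\infty)$ in the other, $P_n^{+}$ (they are different components, else the remaining one would be relatively compact). Routine connectedness arguments, all using only $h(\gamma x)>h(x)$ and compactness of $\Sigma_n$, then give $P_n^{-}\subseteq S\times(-\infty,\nu_n]$ and $P_n^{+}\subseteq S\times[\mu_n,+\infty)$, that $\gamma$ carries $P_n^{\pm}$ onto $P_{n+1}^{\pm}$, that $\Sigma_{n+1}\subseteq P_n^{+}$, and hence the nesting $P_n^{-}\subseteq P_{n+1}^{-}$; consequently $\Sigma_n$ and $\Sigma_{n+1}$ are consecutive and cobound a closed region $W_n$ with $\gamma(W_n)=W_{n+1}$. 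From $\nu_n\to-\infty$ and $\mu_n\to+\infty$ one obtains $\bigcap_n P_n^{-}=\emptyset=\bigcap_n P_n^{+}$, so no residual half--infinite region survives above or below the stack, and $M=\bigcup_n W_n=\bigcup_n\gamma^{n}W_0$. Finally $W_0\subseteq S\times[t_0,\nu_1]$, so $W_0$ is a closed subset of a compact slab, hence compact, and it maps onto $M/\langle\gamma\rangle$; therefore $M/\langle\gamma\rangle$, and a fortiori $M/G$, is compact.

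The step I expect to be the real work is the third one: making the ``stacking order'' on $\{\Sigma_n\}$ precise and verifying rigorously that the translates of one section exhaust $M$ with no leftover end. This is exactly where the two hypotheses do their work — compactness of $S$, to make the slabs and the surfaces $\Sigma_n$ compact (so that orbits and sections cannot escape ``invisibly''), and the isometry assumption, to make $\{\gamma^{n}\}$ equicontinuous so that the uniform escape of the $\Sigma_n$ follows from proper discontinuity — whereas the product structure $M\cong S\times\mathbb{R}$ enters only through the separating horizontal sections and the height function $h$.
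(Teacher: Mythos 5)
Your proof is correct in substance but takes a genuinely different route from the paper's. The paper works with a single horizontal section $S_0$: after passing to $g=\gamma^{n_0+1}$ it shows $g^{i}S_0\cap S_0=\emptyset$ for all $i\neq 0$, lets $D$ be the compact region cobounded by $S_0$ and $gS_0$, and then proves by induction that $V^{+}=\bigcup_{i\geq 0}g^{i}D$ and $V^{-}=\bigcup_{i<0}g^{i}D$, using a volume-comparison argument to rule out the bad cases (this is precisely where orientability and the $G$-invariant Riemannian volume form are used). You instead apply the separation hypothesis to \emph{every} horizontal slice simultaneously, extract the pointwise strict inequality $h(\gamma x)>h(x)$ after normalization, and stack the translates $\Sigma_n$ by height, driving the exhaustion by an escape-to-infinity argument that combines proper discontinuity of the covering action with equicontinuity of the isometries. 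This is arguably cleaner: it avoids both the volume comparison and the paper's long case analysis, and it does not appear to need orientability at all. The trade-off is your reduction step, ``using the stated hypothesis, uniformly in the section'': you read the hypothesis as quantified over all $t_0$ with a single $n_0$. That is a defensible reading of the wording, but it is strictly stronger than what the paper's own proof consumes (only one fixed $S_0$), so under the weaker single-section reading your pivotal observation — that $x\mapsto h(\gamma x)-h(x)$ is nowhere zero on all of $M$, hence of constant sign — is simply unavailable and the argument would not launch. (In the paper's application the hypothesis is in fact verifiable for every $t_0$, so the downstream results are unaffected, but strictly speaking you prove a slightly weaker theorem.) One more caveat: the deferred ``routine connectedness arguments'' — that $\gamma$ carries $P_n^{\pm}$ to $P_{n+1}^{\pm}$ and that $\Sigma_{n+1}\subseteq P_n^{+}$ — genuinely need the uniform escape of the $\Sigma_n$ (e.g.\ $\gamma(P_n^{+})=P_{n+1}^{-}$ would force $P_n^{+}\subseteq h^{-1}(-\infty,\nu_{n+1})$, contradicting unboundedness; and $\Sigma_{n+1}\subseteq P_n^{-}$ would, together with $\gamma(P_n^{-})=P_{n+1}^{-}$, make $\nu_{m}$ eventually nonincreasing, contradicting $\mu_m\to\infty$), not merely $h(\gamma x)>h(x)$ and compactness of $\Sigma_n$; the arguments go through, but should be spelled out rather than labelled routine.
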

\begin{proof}Let $g=\gamma^{n_0+1}\in G$.  First, note that $g^i S_0\cap S_0=\emptyset$, for all $i>0$.  Note, also, that if $g^{-i}S_0\cap S_0\neq\emptyset$, for some $i>0$, then $g^{i}(g^{-i}S_0\cap S_0)\neq\emptyset$, and therefore $S_0\cap g^i S_0\neq\emptyset$, hence $i=0$.  Thus, we conclude that $g^i S_0\cap S_0=\emptyset$, for all $i\neq 0$.
	
We will show that the translates $g^iS_0$ partition $M$ into compact connected submanifolds which meet only at their boundaries.  To do this, note that since $S_0$ separates $M$ into two connected components, so does $gS_0$.  Let the closures of the two connected components of $M-S_0$ be $V^+$ and $V^-$.  Then, the closures of the connected components of $M-gS_0$ are $gV^+$ and $gV^-$, respectively.  Since $S_0\cap gS_0=\emptyset$, we must have $gS_0\subset (V^+)^0$ or $gS_0\subset (V^-)^0$.  Suppose, without loss of generality, that $gS_0\subset (V^+)^0$.  Since $S_0$ disconnects $M$ into exactly two connected components, $gS_0$ must disconnect $V^+$ into exactly two connected components also, since $M-gS_0=(V^+-gS_0)\cup V^-$ and $S_0\cap gS_0=\emptyset$.  One of the two connected components of $M-gS_0$ must contain $S_0$.  Let the closure of this connected component be denoted by $D$; it is now clear that $\partial D=S_0\cup gS_0$.  We will show that $D$ is a connected submanifold of $M$ whose boundary is $S_0\cup gS_0$, that $D$ is compact, that $g^i D\cap g^jD=\emptyset$ for $i\neq j$, unless $|i-j|=1$ in which case $g^iD\cap g^j D$ is the translate of $S_0$ which is the common boundary surface of both $g^iD$ and $g^jD$, and that $M=\bigcup g^k D$:

\textit{$D$ is compact:}  Let $h:M\rightarrow\mathbb{R}$ be the projection to the $\mathbb{R}$ factor.  If necessary, we can compose $h$ with a linear map, so that $h(x)>t_0$ for $x\in gS_0$ and $h(V^+)\subset\mathbb{R}_{\geq 0}$.  Since $gS_0$ is compact and connected, $h(gS_0)\subset\mathbb{R}$ is a closed interval; let $h_{max}$ be its upper bound. Let $x_1=(s_1,t_1), x_2=(s_2,t_2)\in D$ be any two points.  Consider the vertical lines $\left\lbrace s_1\right\rbrace\times\mathbb{R}$ and $\left\lbrace s_2\right\rbrace\times\mathbb{R}$.  Because $gS_0$ disconnects $V^+$, $\left\lbrace s_i\right\rbrace\times\mathbb{R}$ must intersect $gS_0$ in at least one point which is at a distance at most $h_{max}-t_0$ away from $x_i$.  Therefore, $x_1$ can be joined to $x_2$ by a path of length no more than $2(h_{max}-t_0)+diam(gS_0)$.  Since $gS_0$ is compact, this number is not $\infty$, and we conclude that $D$ is a closed bounded set.  By the theorem of Hopf-Rinow, see Theorem 2.8(b) in \cite{MandoCRiemGeom}, $D$ is compact.

\textit{$g^2S_0\subset V^+-D$:}  We must have either $g^2S_0\subset V^+-D$, or $g^2S_0\subset D$, or $g^2S_0\subset V^-$.  Let $W^+$ be the closure of the connected component of $M-gS_0$ which contains $D$, and let $W^-$ be the closure of its complement.  If $g^2S_0\subset D$, then $gW^+\subset W^+$ or $gW^+\subset V^+$; however, since $(W^+)^0\supset S_0$, hence $(gW^+)^0\supset gS_0$, thus if $gW^+\subset W^+$, we would have $gS_0\subset (W^+)^0$, which is not true.  Therefore, we have $gW^+\subset V^+$.  On the other hand, $gV^+=W^+$ or $gV^+=W^-$.  Since $D=V^+\cap W^+$, we have $gD=gV^+\cap gW^+$, hence must have $gD=W^+\cap gW^+$, or $gD=W^-\cap gW^+$.  Now, $W^-\cap g^2S_0=\emptyset$, thus $W^-$ is contained in one of $(gW^+)^0$ or $(gW^-)^0$.  Since $(W^+)^0\supset S_0$, we have $(gW^+)^0\supset gS_0$, and $(gW^+)^0\cap W^-\neq\emptyset$, hence $W^-\subset gW^+$.  This imples that if $gD=W^-\cap gW^+$, then $gD$ would fail to be compact since $gD\supset W^-$, which is not true.  Therefore, $gD=W^+\cap gW^+$, in which case we also have $gD\subset D$, as $gW^+\subset V^+$.  We must also have $gD\neq D$, since $D$ and $gD$ have different boundaries, i.e. $gD$ is properly contained in $D$, with $D-gD$ having nonempty interior.  Since $M$ is orientable, the Riemannian metric for which $G$ acts by isometries induces a nowhere vanishing volume form $\omega$ on $M$, such that $G$ preserves the Riemannian measure induced by $\omega$.  But since $gD\subset D$, we must have $vol(gD)<vol(D)$, which is impossible as we must have $vol(gD)=vol(D)$.

Similarly, if $g^2S_0\subset V^-$, let $D'$ be the closure of the connected component of $V^--g^2S_0$ which contains $S_0$: it is a connected submanifold of $M$ whose boundary is $S_0\cup g^2S_0$.  Let $W^+$ be the component of $M-gS_0$ which contains $D\cup D'$, and similarly let $Q^+$ be the component of $M-g^2S_0$ which contains $D\cup D'$.  With this notation, we have $W^+=D\cup V^-$, and $Q^+=D'\cup V^+$.  It is now straightforward to verify that $D\cup D'=W^+\cap Q^+$, and therefore $g(D\cup D')=gW^+\cap gQ^+$.  Since the isometry $x\rightarrow g^{-1}x$ takes $g^2S_0$ to $gS_0$ and $gS_0$ to $S_0$, we must have (a.) $gW^+=V^+$ or $gW^+=V^-$, and (b.) $gQ^+=W^-\subset (V^+)^0$ or $gQ^+=W^+$.  In the cases (1.) $gW^+=V^+$ and $gQ^+\subset V^+$, and (2.) $gW^+=V^-$ and $gQ^+=W^+$, the intersection $gW^+\cap gQ^+$ is noncompact, and therefore cannot equal $g(D\cup D')$.  In the case (3.) $gW^+=V^-$ and $gQ^+\subset (V^+)^0$, the intersection $gW^+\cap gQ^+$ is empty, while case (4.) $gW^+=V^+$ and $gQ^+=W^+$, $g(D\cup D')=D$ is seen to be impossible by a volume comparison argument as above.  Therefore, $g^2S_0\subset V^+-D$.  Now, we see that $D$ and $gD$ share the boundary surface $gS_0$, and if $D^0\cap (gD)^0\neq\emptyset$, then $D-gS_0$ will remain connected while having nontrivial intersection with both of the connected components of $M-gS_0$, which is impossible.

\textit{Let $D_k=\bigcup_{i=0}^{k}g^iD_k$.  Then, $D_k$ is connected, $D_k\subset V^+$, $g^{k+1}S_0\subset V^+-D_k$, and $g^{k+1}D\cap D_k=g^kS_0$.}  We proceed by induction on $k$ to show that $g^{k+1}S_0$ cannot be contained in either $V^-$ or $D_{k-1}$.

Let $W^+$ be the closure of the connected component of $M-g^kS_0$ which contains $D_{k-1}$.  Since $g^{k+1}S_0$ is disjoint from all $g^iS_0$, for $0\leq i \leq k$, if $g^{k+1}S_0\subset D_{k-1}$, then $g^{k+1}S_0\subset g^jD^0$, for some $0\leq j \leq k-1$.  By multiplying by $g^{-j}$, this is equivalent to $g^{k-j+1}S_0\subset D^0$, and since $D^0\subset D_{k-j}$, we have $g^{k-j+1}S_0\subset D_{k-j}$, which is ruled out by the induction hypothesis if $j> 0$.  Therefore, we now assume that $j=0$, hence that $g^{k+1}S_0\subset D^0$.  In this case, $g^{k+1}S_0$ separates $D$ into two connected components, the closures of which we denote by $E$ and $F$.  To be specific, $E$ is the closure of the connected component of $D-g^{k+1}S_0$ bounded by $S_0$ and $g^{k+1}S_0$, while $F$ is the closure of the component bounded by $gS_0$ and $g^{k+1}S_0$.  Since $gW^+$ is the closure of a connected component of $M-g^{k+1}S_0$, we immediately see that either $gW^+=E\cup V^-$ or $gW^+=cl(V^+-E)$.  On the other hand, $gV^+$ is the closure of a connected component of $M-gS_0$, thus we must have $gV^+=D\cup V^-$ or $gV^+=cl(V^+-D)$.  Since $D_{k-1}=V^+\cap W^+$, we have $gD_{k-1}=gV^+\cap gW^+$.  In the cases $gW^+=E\cup V^-$, $gV^+=D\cup V^-$, and $gW^+=cl(V^+-E)$, $gV^+=cl(V^+-D)$, the intersection $gV^+\cap gW^+$ fails to be compact, hence cannot equal $gD_{k-1}$.  In the case $gW^+=E\cup V^-$ and $gV^+=cl(V^+-D)$, we have $gV^+\cap gW^+=\emptyset$, which is again a contradiction.  In the remaining case, $gW^+=cl(V^+-E)$ and $gV^+=D\cup V^-$, we have $gD_{k-1}=F\subset D$, which is ruled out by a volume argument.

Now, if $g^{k+1}S_0\subset V^-$, let $D'$ be the closure of the component of $V^--g^{k+1}S_0$ which contains $S_0$ - it is a connected submanifold bounded by $S_0$ and $g^{k+1}S_0$.  Let $W^+$ be as above, and let $Q^+$ now be the connected component of $M-g^{k+1}S_0$ which contains $D_{k-1}\cup D'$.  We now have $D_{k-1}\cup D'=W^+\cap Q^+$, and therefore\\ $g^{-1}(D_{k-1}\cup D')=g^{-1}W^+\cap g^{-1}Q^+$.  Using the same arguments as before, we see that either $g^{-1}W^+=D_{k-2}\cup V^-$, or $g^{-1}W^+=W^-\cup g^{k-1}D$, and that we also have either $g^{-1}Q^+=W^+$, or $g^{-1}Q^+=W^-$.  In either of the two cases $g^{-1}W^+=D_{k-2}\cup V^-$, $g^{-1}Q^+=W^+$, and $g^{-1}W^+=W^-\cup g^{k-1}D$, $g^{-1}Q^+=W^-$, the intersection $g^{-1}W^+\cap g^{-1} Q^+$ fails to be compact, as it contains the noncompact set $V^-$, or $W^-$, respectively.  In the case $g^{-1}W^+=W^-\cup g^{k-1}D$, $g^{-1}Q^+=W^-$, we have $g^{-1}W^+\cap g^{-1} Q^+=\emptyset$, which is a contradiction.  The last case to consider is $g^{-1}W^+=W^-\cup g^{k-1}D$ and $g^{-1}Q^+=W^+$.  Here, $g^{-1}W^+\cap g^{-1} Q^+=g^{k-1}D$, which gives $g^{-1}(D_{k-1}\cup D')=g^{k-1}D$; a contradiction follows from a volume comparison argument, after noting that $k>0$.

Thus, we conclude that $g^{k+1}S_0\subset V^+-D_{k-1}$, and also that the connected submanifold $g^kD$ is contained in $V^+$, hence $D_k=D_{k-1}\cup g^k D\subset V^+$.

\textit{Given any $x_0\in V^+$, there exists $k$ such that $x\in D_k$.}\\ Let $l=inf\left\lbrace d(x,y): x\in S_0, y\in gS_0\right\rbrace$ be the distance between $S_0$  and $gS_0$ in the path metric induced by the Riemannian metric on $M$.  By construction of the submanifold $D_k$, we see that any path from $g^kS_0$ to $S_0$ must pass through all $g^iS_0$ for $1\leq i\leq k-1$, hence $l_k=\inf\left\lbrace d(x,y): x\in g^kS_0, y\in S_0\right\rbrace=kl$ is an increasing function of $k$.  Suppose now that $x_0\notin D_k$ for all $k$, and choose $k_0$ large enough so that $l_{k_0}=k_0l>d(x_0,S_0)$.  Since $k_0\notin D_{k_0}$, any path from $x_0$ must pass through all the $g^iS_0$ for $1\leq i\leq k$ as all of them disconnect $M$.  Then, we must have $d(x_0,S_0)\geq d(g^{k_0}S_0,S_0)=k_0l$ which is a contradiction.

\textit{$g^{-1}S_0\subset V^-$:}  We argue by contradiction.  If $g^{-1}S_0\subset V^+$, then since $V^+=\bigcup_{i\geq 0}g^iD$, and since the $g^jS_0$ are disjoint for any two different values of $j$, we must have $g^{-1}S_0\subset (g^kD)^0$ for some $k\geq 0$.  Then, $S_0\subset (g^{k+1}D)^0$, where $k+1\geq 1$, thus $g^{k+1}S_0$ and $g^{k+2}S_0$ would lie in two different connected components of $M-S_0$, whereas we know that this is not the case, as $g^iS_0\subset (V^+)^0$ for all $i>0$.

\textit{By an analogous argument, $V^-=\bigcup_{i<0}g^iD$.}  This shows the cocompactness of the action of $G$ on $M$.
\end{proof}

I shall now apply Theorem \ref{MainThm} to prove a result which strengthens Theorem 2.9 in \cite{Moon}, and which by doing so closes the gap in the proof of Theorem 2.10 in \cite{Moon} for manifolds with nonempty boundary.  I shall also prove that a suitably chosen finite cover of $M$ fibers in the desired way, meaning that $\pi_1(F)$ is commensurable with $U$.  This proof covers the case $\partial M\neq\emptyset$ as well as the case $\partial M=\emptyset$, neither of which has been established in \cite{Moon}.  While this does not prove commensurability for the particular cover of $M$ considered in \cite{Moon}, it does establish the fibration result claimed therein and also in this article.

Before I state the theorem, for ease of reference, I shall restate a certain technical condition regarding the 3-manifolds under consideration here and in \cite{Elkalla}, \cite{Moon}, and \cite{Sah}:\\

\textit{(A)  Let $U$ be a finitely generated subgroup of the fundamental group $G$ of a compact 3-manifold $M$.  Suppose that $U$ contains a nontrivial subnormal subgroup $N$ of $G$, and that the index $|G:U|$ is infinite.  If $N$ is not isomorphic to the infinite cyclic group $\mathbb{Z}$, then $M$ is finitely covered by a bundle over $\mathbb{S}^1$ with fiber a compact surface $F$ and $\pi_1(F)$ is commensurable with $U$.}\\

The promised result is the following theorem:

\begin{thm}\label{BoundaryCasesSettled}Let $M$ be a compact 3-manifold with $M=X_1\cup_{T}X_2$, or $M=X_1\cup_{T}$, where statement (A) holds for $X_i$, with $i=1,2$.  Suppose that $G=\pi_1(M)$ contains a finitely generated subgroup $U$ of infinite index satisfying the following:
\begin{enumerate}
\item $U$ contains a nontrivial subnormal subgroup $N$ of $G$,
\item $N$ intersects nontrivially the fundamental group of the splitting torus,
\item $N\cap\pi_1(X_i)$ is not isomorphic to $\mathbb{Z}$. 
\end{enumerate}
If the graph of groups $\mathcal{U}$ corresponding to $U$ is of finite diameter, then $\widehat{M}$ has a finite cover $\widetilde{M}$ which is a bundle over $\mathbb{S}^1$ with fiber a compact surface $F$, and $\pi_1(F)$ is commensurable with $U$.
\begin{proof}If $M$ is closed, the theorem is essentially a restatement of Theorem 2.9 in \cite{Moon}, with the trivial consideration that $N$ is subnormal rather than normal.  Therefore I now assume that $\partial M\neq\emptyset$.  The reader can verify for themselves that the proof of Theorem 2.9 in \cite{Moon} remains valid up to line 2 on page 31 under the hypothesis on $N$ and $M$ stated above when $\partial M\neq\emptyset$.  The only difference is that the surface $S'$ constructed in the proof of Theorem 2.9 is now a surface with nonempty boundary, and consequently the manifold $M''$, the compact core of $M'$, is itself a manifold with nonempty boundary.  While the remainder of the proof of Theorem 2.9 fails when $\partial M\neq\emptyset$, we can apply Theorem \ref{MainThm} after showing that we have an action of $\mathbb{Z}$ on the cover $M=S'\times\mathbb{R}$, which satisfies the hypothesis of the theorem above.  Using the notation employed in \cite{Moon}, we see that we have the tower of covers $S'\times\mathbb{R}\rightarrow M'\rightarrow M$.
	
We verify that the ingredients in the proof in \cite{Moon} are also present in the $\partial M\neq\emptyset$ case, although for different reasons.  We first show that $\pi_1(S')\neq\mathbb{Z}$, from which it will follow that $N_{\pi_1(S')}=\pi_1(M')$ is finitely generated.  

\textit{Case 1: $M=X_1\cup_{T} X_2$}

First, since $S'\times\mathbb{R}$ is an orientable cover of the orientable manifold $M$, it is itself orientable, which, in turn, implies that the submanifold $S'\times\left\lbrack -1,1\right\rbrack$ of $S'\times\mathbb{R}$ is orientable, hence $S'$ is itself orientable, by the remark on lines 7 and 6 from the bottom, on page 5 of \cite{Hatcher3M}.  We also observe that $S'$ was obtained by gluing finitely many copies of the surfaces $S_i$, which are finite covers of the fibers denoted in \cite{Moon} by $F_i$ chosen in such a way so that $\partial S_i$ has at least 2 boundary components. If either $S_i$ were a surface other than the annulus $\mathbb{S}^1\times I$, then $\pi_1(S_i)$ would be a free group of rank at least 2.  Since $\pi_1(S')$ has the structure of a graph of groups where the vertex groups are all isomorphic to $\pi_1(S_i)$, which must necessarily embed in $\pi_1(S')$ by Bass-Serre theory, we conclude that $\pi_1(S')$ would contain a nonabelian free group, which is clearly impossible.  Therefore, both $S_1$ and $S_2$ are the annulus $\mathbb{S}^1\times I$.  However, in this case, $X_i$ is virtually covered by an annulus bundle over $\mathbb{S}^1$, and thus $\pi_1(X_i)$ contains $\mathbb{Z}\rtimes\mathbb{Z}$ with finite index.  However, such a group cannot contain a (finitely generated) subgroup of infinite index $U$, which contains a nontrivial normal subgroup $N$ not isomorphic to the infinite cyclic group $\mathbb{Z}$, which is a contradiction.  Hence, again by Theorem 3.2 \cite{ScottNorm}, $N_{\pi_1(S')}$ is finitely generated.
As in the case without boundary, $\pi_1(T)\cap\pi_1(S')$ is isomorphic to $\mathbb{Z}$:  It is nontrivial as in \cite{Moon}, since the boundary components of the copies of $S_i$ represent elements of $\pi_1(S_i)$ which map to a nontrivial $\alpha^{r_1r_2}\in\pi_1(M)$; thus, $\pi_1(T)\cap\pi_1(S')$ is a nontrivial free group, which is a subgroup of a free abelian group.  We again conclude that as in \cite{Moon}, $|N_{\pi_1(S')}:\pi_1(S')|=\infty$.

\textit{Case 2: $M=X_1\cup_T$}

The argument in this case is analogous, as $S'$ remains a surface with nonempty boundary after the gluing of the boundary annuli of $F_1\times\mathbb{R}$ which map to the boundary tori $T_i$ in $Y_1$.  Therefore, again $\pi_1(S')=\mathbb{Z}$ implies that $S'$ is the annulus:  In this case, $S'$ was obtained by gluing boundary components of $F_1$; therefore, $\pi_1(S')=\left(\left(\left(\pi_1(F_1)*_{\mathbb{Z}} \right)*_{\mathbb{Z}}\right)...\right)*_{\mathbb{Z}}$, and $F_1$ must be the annulus by the same rank argument, hence because $\partial S'\neq\emptyset$, we have $F_1=S'$.  As in the case above, this implies a contradiction with the theorem's hypothesis.  Once again, $|N_{\pi_1(S')}:\pi_1(S')|=\infty$.

Now, applying Theorem 3 in \cite{HempelJaco}, we conclude that a finite cover of $\widehat{M''}$, the Poincaré associate of the core of $M'$, fibers over $\mathbb{S}^1$ with fiber a compact surface, and that $\pi_1(S')$ is subgroup of finite index of the fundamental group of the fiber; thus, $\pi_1(M')=\pi_1(M'')$ contains $\pi_1(S')$ as a normal subgroup, the quotient by which contains $\mathbb{Z}$ with finite index.

Hence, we see that the automorphism group $\mathfrak{G}$ of the cover $S'\times\mathbb{R}\rightarrow M'$, being isomorphic to the quotient of the normalizer of $\pi_1(S'\times\mathbb{R})$ in $\pi_1(M')$ by $\pi_1(S'\times\mathbb{R})$ - see Proposition 1.39(b) in \cite{Hatcher}, contains the infinite cyclic group $\mathbb{Z}$ with finite index.  

The manifold $M$ is compact, therefore, we can take its double $DM$, embed it in $\mathbb{R}^W$ for some $W\in\mathbb{N}$ by Whitney's Embedding Theorem, see page 53 in \cite{GuilleminPollack}, and pull back the Riemannian metric on $\mathbb{R}^W$ to obtain a Riemannian metric on $M$, which we can then pull back on $M'$, and then on $S'\times\mathbb{R}$, under the covering maps to obtain the desired Riemannian metric on $S'\times\mathbb{R}$ for which the action of $\mathfrak{G}$ is by isometries.

The final step we need to make, before we can apply Theorem \ref{MainThm}, is to show that for $S_0=S'\times\left\lbrace 0\right\rbrace$, we can find a positive integer $n_0$ with the property that $\gamma^n S_0\cap S_0=\emptyset$, for all $n\geq n_0$, where $\gamma$ is a generator of the infinite cyclic subgroup of the automorphism group $\frak{G}$.

 First, we produce an integer $n_0>0$ such that $\gamma^n S_0\cap S_0\neq S_0$, for all $n\geq n_0$.  Note that if for some $n$ we have $\gamma^nS_0\cap S_0=S_0$, then $\gamma^n S_0\supset S_0$, and if additionally $\gamma^n S_0\neq S_0$, then $S'\times\mathbb{R}-S_0$ will remain connected, as $S'\times\mathbb{R}-S_0\supset (S'\times\mathbb{R}-\gamma^n S_0) \cup\left\lbrace p \right\rbrace$, for some $p\in \gamma^n S_0-S_0$, and we see that the set on the right side of the containment is connected.  Therefore, for this $n$ we have $\gamma^n S_0=S_0$.  This cannot hold for infinitely many values of $n>0$, because $S_0$ is compact.  To show this, we argue by contradiction and we make the assumption that the set of $n$ such that $\gamma^n S_0=S_0$ is infinite.  Let $n_i$ be an infinite sequence of such values of $n$.   Then, since $\left\langle\gamma\right\rangle$ acts by covering transformations, given any $x_0\in S_0$, there is an open set $\mathcal{U}\subset S'\times\mathbb{R}$, such that if $\gamma^i\mathcal{U}\cap\gamma^j\mathcal{U}\neq\emptyset$, then $i=j$, see page 72 in \cite{Hatcher}.  But we can find, for the length metric induced by the Riemannian metric on $S'\times\mathbb{R}$ for which $\eta$ is an isometry, an open ball $B(x_0,\epsilon)\subset\mathcal{U}$, and we see that $d(\gamma^i x_0,\gamma^j x_0)>2\epsilon$, for any $i\neq j$.  Therefore, the sequence $\gamma^{n_i} x_0\in S_0$ can have no Cauchy subsequence and no accumulation point in $S_0$, which it must, by compactness of $S_0$.  Hence, there exists an integer $n_0>0$ such that for all $n>n_0$, $\gamma^n S_0\neq S_0$.  From now on, we let $n$ be an integer greater than $n_0$, so that $\gamma^n S_0\cap S_0\neq S_0$.  We proceed to show that we must also have $\gamma^n S_0\cap S_0=\emptyset$.
 
 Now, recall from \cite{Moon} that the cover $S'\times\mathbb{R}$ is obtained by gluing copies of  $S_i\times\mathbb{R}$ along their boundary annuli which are of the form $C_i\times\mathbb{R}$, where $C_i$ is a component of $\partial S_i$ which projects under $p_1\circ p_2$ and $q_1\circ q_2$, respectively, into $T$.  The covering map $\eta: S'\times\mathbb{R}\rightarrow M$ is described by pasting together the various copies of the covering maps $p_1\circ p_2\circ p_3$ and $q_1\circ q_2\circ q_3$ to $X_1$ and $X_2$ from the copies of $S_i\times\mathbb{R}$.  If $\gamma^n S_0\cap S_0\neq\emptyset$, then since $S_0$ and $\gamma^n S_0$ are closed sets, we can find a non-manifold point $x_0\in\gamma^n S_0\cap S_0$ for the union of the two surfaces $\gamma^n S_0\cup S_0\subset S'\times\mathbb{R}$, which under the local diffeomorphism $\eta$ would map to the non-manifold point $\eta(x_0)$ of $\eta((\gamma^n S_0\cup S_0)\cap\eta^{-1}(X_i)))\subset X_i$.  Hence, the image of $x_0$ under the appropriate copy of $p_3\circ p_2$, or $q_3\circ q_2$, must be a non-manifold point of a copy of the fiber $F_i$ in $Y_i$ for $i=1$ or $i=2$, as both $\gamma^n S_0\cap\eta^{-1}(X_i)$ and  $S_0\cap\eta^{-1}(X_i)$ cover the same embedded copy of the fiber $F_i$ in $Y_i$, and $Y_i$ covers $X_i$ by a local diffeomorphism.  However, all of the fibers $F_i$ of the bundle $Y_i$ are embedded surfaces and have no non-manifold points, which is a contradiction.
 
 Now, by Theorem \ref{MainThm}, the cover $M'$ is compact, since the automorphism group of the cover $\mathfrak{G}$ acts with compact quotient.  Now, we see that by Theorem 3 in \cite{HempelJaco}, a finite cover $V$ of $\widehat{M'}$, and therefore also of $\widehat{M}$, fibers over the circle with fiber a compact surface $F$.  We now show that either $U$ is commensurable with $\pi_1(F)$, or a finite cover of $V$, hence also of $M$, fibers over the circle with fiber a compact surface $F'$, and $\pi_1(F')$ is commensurable with $U$; thus proving, in both cases, that a finite cover of $M$ fibers in the desired way.  To this end, consider $U\cap\pi_1(F)<\pi_1(M)$.  We note that we have the short exact sequence $1\rightarrow\pi_1(F)\rightarrow \pi_1(V)\rightarrow \mathbb{Z}\rightarrow 1$, and that $\pi_1(V)$ is a subgroup of finite index of $\pi_1(M)$.  If $U\cap\pi_1(F)$ is of finite index in both $U$ and $\pi_1(F)$, then we are done - $U$ is commensurable with $\pi_1(F)$.  Note that if $\pi_1(F)\cap N=\left\lbrace 1\right\rbrace$, then $\pi_1(V)\cap N$ embeds in the infinite cyclic group $\mathbb{Z}$, which means that $N$ contains $\mathbb{Z}$ with finite index.  In this case, $N$ is a nontrivial finitely generated subnormal subgroup of $G$, hence by Corollary 2.3 in \cite{Elkalla}, $N$ is the fundamental group of a compact surface - see also Theorem 1 in \cite{HempelJaco}.  Let $\Sigma$ be such a surface.  We see that $\partial\Sigma=\emptyset$ is ruled out by the classification of surfaces, whereas if $\partial\Sigma\neq\emptyset$, then as before, we see, after passing to the orientable double cover if necessary, that the finite cover of $\Sigma$ whose fundamental group is $\mathbb{Z}$ is the annulus $\mathbb{S}^1\times I$, hence $\Sigma$ is the annulus or the Moebius strip, and in either case $N=\mathbb{Z}$, contradicting the hypothesis of the theorem.  Thus, we shall henceforth assume that $\pi_1(F)\cap N\neq\left\lbrace 1\right\rbrace$.

 We consider the remaining cases separately below:
 
 \textit{$U\cap\pi_1(F)$ is of finite index in $\pi_1(F)$ but not in $U$:}  This is impossible, since if $\pi_1(F)$ were virtually contained in $U$, but $U$ were not virtually contained in $\pi_1(F)$, then $U$ would be of finite index in $\pi_1(M)$, which contradicts the theorem's hypothesis.  Here are the details:  Consider the subgroup $(U\cap\pi_1(V))\pi_1(F)$ of $\pi_1(V)$.  The map $(U\cap\pi_1(V))f\rightarrow U\cap\pi_1(F)f$, $f\in\pi_1(F)$, from the right cosets of $U\cap\pi_1(V)$ in $(U\cap\pi_1(V))\pi_1(F)$ to the set of right cosets of $U\cap\pi_1(F)$ in $\pi_1(F)$ is injective, and therefore $|(U\cap\pi_1(V))\pi_1(F):U\cap\pi_1(V)|<\infty$.  Now, consider the epimorphism $\phi:\pi_1(V)\rightarrow\mathbb{Z}$ and its restriction $\phi|_{U\cap\pi_1(V)}:U\cap\pi_1(V)\rightarrow\mathbb{Z}$.  Since $|U:U\cap\pi_1(V)|<\infty$ and $|U:U\cap\pi_1(F)|=\infty$, we must have $|U\cap\pi_1(V):U\cap\pi_1(F)|=\infty$.  Since now $Ker(\phi_{U\cap\pi_1(V)})=U\cap\pi_1(F)$ is of infinite index in $U\cap\pi_1(V)$, we have $\overline{t}^k\in Im(\phi|_{U\cap\pi_1(V)})$, for some $k>0$.  Thus $t^kf\in U\cap\pi_1(V)$, for some $f\in\pi_1(F)$, and $t^k\in (U\cap\pi_1(V))\pi_1(F)$.  Now, $(U\cap\pi_1(V))\pi_1(F)t^i\supset\pi_1(F)t^i$, and since $\pi_1(V)=\bigcup_i\pi_1(F)t^i$, we see that $\pi_1(V)=\bigcup_i (U\cap\pi_1(V))\pi_1(F)t^i=\bigcup_{i=0}^{k-1}(U\cap\pi_1(V))\pi_1(F)t^i$.  This shows that $|\pi_1(V):(U\cap\pi_1(V))\pi_1(F)|<\infty$, thus $|\pi_1(V):U\cap\pi_1(V)|<\infty$; from here, we get $|\pi_1(M):U\cap\pi_1(V)|<\infty$, and finally $|\pi_1(M):U|<\infty$ - a contradiction, as claimed.

 \textit{$U\cap\pi_1(F)$ is of finite index in $U$, but not in $\pi_1(F)$:}  In this case, $U\cap\pi_1(F)$ is a finitely generated subgroup of $\pi_1(F)$ of infinite index, which contains a nontrivial subnormal subgroup $N\cap\pi_1(F)\triangleleft_s\pi_1(F)$.  By Theorem 2.1 in \cite{Sah}, $F$ is the torus, since $V$ and therefore $F$ is orientable.  However, torus bundles over $\mathbb{S}^1$ are geometric, hence $V$ is a geometric manifold.  Since $U\cap\pi_1(V)$ is of finite index in $U$ and of infinite index in $\pi_1(V)$, it is a finitely generated infinite index subgroup of $\pi_1(V)$, whereas the subgroup $N\cap\pi_1(V)<U\cap\pi_1(V)$ is a nontrivial subnormal subgroup of $\pi_1(V)$ which is not infinite cyclic.  Now, we can apply Theorem 3.9 in \cite{Sah} to $V$, $U\cap\pi_1(V)$, and $N\cap\pi_1(V)$, to conclude that a finite cover of $V$, hence also of $M$, is a bundle over $\mathbb{S}^1$, with fiber a compact surface $F'$, and also that $U\cap\pi_1(V)$ is commensurable with $\pi_1(F')$.  Since $U\cap\pi_1(V)$ is of finite index in $U$, we conclude that $U$ is itself commensurable with $\pi_1(F')$, as desired.
 
 \textit{$U\cap\pi_1(F)$ is of finite index in neither $\pi_1(F)$ nor $U$:}  If $U\cap\pi_1(F)$ is finitely generated, then as in the case above, the conclusion follows from Theorem 3.9 in \cite{Sah} for the same reasons.  Therefore, we assume that the 3-manifold group  $U\cap\pi_1(F)$ is not finitely generated.  In such a case, however, Theorem 2.6 in \cite{Elkalla} implies that the nontrivial subnormal subgroup $\pi_1(F)\cap N$ of $U\cap\pi_1(F)$ is infinite cyclic and also that $\pi_1(F)\cap N\triangleleft U\cap\pi_1(F)$.  Since the non-finitely generated group $U\cap\pi_1(F)$ properly contains its infinite cyclic subgroup $\pi_1(F)\cap N$, we can find an element $t$ of $U\cap\pi_1(F)$ such that $t\notin\pi_1(F)\cap N$.  Consider now the subgroup $H=\left\langle \alpha,t\right\rangle$ of $\pi_1(F)$, where $\alpha$ is a generator of $\pi_1(F)\cap N=\mathbb{Z}$. Since $H<\pi_1(F)$ contains $\mathbb{Z}^2$ with index at most 2, we see that $F$ is an orientable surface covered by the torus.  The cover is necessarily a finite cover by compactness, hence $F$ itself is the torus, and $\pi_1(F)=\mathbb{Z}^2$.  This, however, contradicts the assumption that $U\cap\pi_1(F)$ is not finitely generated, as $\mathbb{Z}^2$ has no non-finitely generated subgroups, which finishes the proof.
 
 Finally, we note that the proof that $U$ is commensurable with $\pi_1(F)$, for a certain virual fibering of $M$, is also applicable to the case when $M$ is closed.  This addresses the lack of proof of this fact in \cite{Moon}.
\end{proof}
\end{thm}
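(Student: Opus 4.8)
The plan is to treat the closed and bounded cases separately. When $\partial M=\emptyset$ the assertion is, after the harmless replacement of ``normal'' by ``subnormal'', precisely Theorem 2.9 of \cite{Moon}, so I would invoke that result directly and devote the work to the case $\partial M\neq\emptyset$. For the bounded case I would first carry the proof of Theorem 2.9 in \cite{Moon} verbatim as far as it is valid (through line 2, page 31), which delivers a tower of covers $S'\times\mathbb{R}\to M'\to M$, where $S'$ is now a surface \emph{with} boundary obtained by gluing finite covers $S_i$ of the fibers $F_i$ of the geometric pieces lying over the $X_i$. The remaining task is to replace the portion of \cite{Moon} that collapses once $S'$ has boundary, and this is where Theorem \ref{MainThm} enters.

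The first replacement is to show $\pi_1(S')\not\cong\mathbb{Z}$, whence Scott's finiteness theorem (Theorem 3.2 of \cite{ScottNorm}) makes $N_{\pi_1(S')}=\pi_1(M')$ finitely generated. I would argue via Bass--Serre theory: $\pi_1(S')$ is a graph of groups whose vertex groups $\pi_1(S_i)$ inject, so if some $S_i$ were not an annulus then $\pi_1(S_i)$ would be free of rank $\ge 2$ and $\pi_1(S')$ would contain a rank-two free group, impossible for $\mathbb{Z}$. Hence every $S_i$ is an annulus, which forces $\pi_1(X_i)$ to contain $\mathbb{Z}\rtimes\mathbb{Z}$ with finite index; such a group cannot contain a finitely generated infinite-index subgroup $U$ carrying a nontrivial normal subgroup $N\not\cong\mathbb{Z}$, a contradiction. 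The same rank argument disposes of the one-edge case $M=X_1\cup_T$. I would then verify, as in \cite{Moon}, that $\pi_1(T)\cap\pi_1(S')\cong\mathbb{Z}$ (a nontrivial free subgroup of a free abelian group) and that $|N_{\pi_1(S')}:\pi_1(S')|=\infty$. Feeding this into Theorem 3 of \cite{HempelJaco} applied to the compact core $M''$ of $M'$ shows $\pi_1(M')$ contains $\pi_1(S')$ normally with virtually-$\mathbb{Z}$ quotient, so the deck group $\mathfrak{G}$ of $S'\times\mathbb{R}\to M'$ contains $\mathbb{Z}$ with finite index.

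The crux --- and the step I expect to be the main obstacle --- is checking that the $\mathfrak{G}$-action satisfies the hypotheses of Theorem \ref{MainThm}. I would first put a Riemannian metric on $M$ by embedding the double $DM$ in some $\mathbb{R}^W$ (Whitney) and pulling back, then pull this metric up the tower so that $\mathfrak{G}$ acts by isometries. Writing $\gamma$ for a generator of the infinite-cyclic subgroup of $\mathfrak{G}$ and $S_0=S'\times\{0\}$, I must find $n_0$ with $\gamma^nS_0\cap S_0=\emptyset$ for all $n\ge n_0$, and I would obtain it in two stages. First, $\gamma^nS_0\cap S_0=S_0$ can fail to force $\gamma^nS_0=S_0$ only if $S'\times\mathbb{R}-S_0$ remains connected, which is false; and $\gamma^nS_0=S_0$ can hold for only finitely many $n$, since a properly discontinuous action separates the orbit of a point by a fixed positive distance in the pulled-back metric, contradicting compactness of $S_0$. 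Second, for $n>n_0$, a nonempty intersection $\gamma^nS_0\cap S_0$ strictly smaller than $S_0$ would create a non-manifold point of $\gamma^nS_0\cup S_0$, which descends under the local diffeomorphism $\eta$ to a non-manifold point of an embedded fiber $F_i$ in some $Y_i$ --- absurd. With the hypotheses met, Theorem \ref{MainThm} gives that $M'$ is compact, and Theorem 3 of \cite{HempelJaco} then produces a finite cover $V$ of $\widehat M$ fibering over $\mathbb{S}^1$ with compact fiber $F$.

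The last step is to upgrade this to the commensurability statement by a case analysis on $U\cap\pi_1(F)$ inside the sequence $1\to\pi_1(F)\to\pi_1(V)\to\mathbb{Z}\to1$. If $U\cap\pi_1(F)$ has finite index in both $U$ and $\pi_1(F)$ we are done. Otherwise I would first rule out $\pi_1(F)\cap N=1$ --- then $N$ would be virtually $\mathbb{Z}$, hence a compact surface group by Corollary 2.3 of \cite{Elkalla}, hence actually $\mathbb{Z}$ by the surface classification, against the hypothesis on $N$. The case ``finite index in $\pi_1(F)$ but not in $U$'' I would eliminate by a coset count forcing $|\pi_1(M):U|<\infty$. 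The case ``finite index in $U$ but not in $\pi_1(F)$'' makes $U\cap\pi_1(F)$ a finitely generated infinite-index subgroup of $\pi_1(F)$ carrying a nontrivial subnormal subgroup, so $F$ is a torus by Theorem 2.1 of \cite{Sah}, $V$ is geometric, and Theorem 3.9 of \cite{Sah} applied to $V$, $U\cap\pi_1(V)$, $N\cap\pi_1(V)$ produces a finite cover of $M$ fibering with fiber $F'$ and $\pi_1(F')$ commensurable with $U\cap\pi_1(V)$, hence with $U$. Finally ``finite index in neither'' reduces to the preceding case when $U\cap\pi_1(F)$ is finitely generated; when it is not, Theorem 2.6 of \cite{Elkalla} forces $\pi_1(F)\cap N\cong\mathbb{Z}$ normal in $U\cap\pi_1(F)$, and adjoining any element of $(U\cap\pi_1(F))\setminus(\pi_1(F)\cap N)$ yields a copy of $\mathbb{Z}^2$ of index at most $2$ in $\pi_1(F)$, so $\pi_1(F)=\mathbb{Z}^2$, which has no non-finitely-generated subgroup --- contradiction. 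Since this final paragraph never used $\partial M\neq\emptyset$, it simultaneously supplies the commensurability statement missing from \cite{Moon} in the closed case.
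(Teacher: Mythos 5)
Your proposal follows the paper's proof essentially step for step: the same reduction of the closed case to Moon's Theorem 2.9, the same Bass--Serre argument ruling out $\pi_1(S')\cong\mathbb{Z}$ in both gluing cases, the same construction of a pulled-back Riemannian metric via the double and Whitney embedding, the same two-stage verification of the hypothesis of Theorem \ref{MainThm} (finitely many exact recurrences of $S_0$ by proper discontinuity, then exclusion of partial intersections via a non-manifold-point argument pushed down to the fibers $F_i$), and the same four-way case analysis on $U\cap\pi_1(F)$ invoking Elkalla's Corollary 2.3 and Theorem 2.6, and Theorems 2.1 and 3.9 of \cite{Sah}. No meaningful divergence from the paper's own argument.
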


\end{section}

\begin{section}{Irreducibility of the summands in an irreducible torus sum\\Irreducibility of $\widehat{M}$}\label{IrredSec}
	
The idea of making use of the $\pi_1$-injectivity of the embedded torus $\mathcal{T}$ is a key ingredient in the following lemma and is due to Peter Scott.  After having submitted this pre-print for peer review, the anonymous referree stated that this result was already known.  I myself know of no references for it, which is, of course, not a proof of their absence, but I do not doubt the referee's words.  I do feel, however, that this argument may still be of independent interest.

\begin{lem}\label{A}Let $M=M_1\cup_{\mathcal{T}} M_2$, or $M=M_1\cup_{\mathcal{T}}$, where $\mathcal{T}\subset M$ is an incompressible torus.  If $M$ is irreducible, then any 2-sphere $S\subset M_i-\mathcal{T}$ bounds a 3-ball in $M_i-\mathcal{T}$.
	
\begin{proof}By abuse of notation, let $S$ be a 2-sphere embedded in $M_i-\mathcal{T}$, for $i=1$ or $2$, and $B$ the 3-ball that it bounds in $M$.  We need to show that $B$ is contained in $M_i-\mathcal{T}$.  Consider $B^0=B-\partial{B}$, and consider also $B^0\cap\mathcal{T}$.  Clearly, $B^0\cap\mathcal{T}$ is open in $\mathcal{T}$ as $B^0$ is open in $M$.  On the other hand, since $S$ and $\mathcal{T}$ are compact, $\inf\left\lbrace d(x,y):x\in S, y\in\mathcal{T}\right\rbrace > 0$, for any length metric on $M$ induced by a Riemannian metric.  Therefore, if $\left\lbrace p_i \right\rbrace$ is a sequence in $B^0\cap\mathcal{T}$ such that $p_i\rightarrow p$, we must have $p\in B$ as $B$ is sequentially compact, but $p\notin S$, hence $p\in B^0$; $\mathcal{T}$ is also compact, therefore sequentially compact, hence $p\in\mathcal{T}$, therefore $p\in B^0\cap\mathcal{T}$.  Thus, we see that $B^0\cap\mathcal{T}$ is also closed.  Because $\mathcal{T}$ is connected, $B^0\cap\mathcal{T}$ is either empty or equals $\mathcal{T}$.  We cannot have $B^0\cap\mathcal{T}=\mathcal{T}$ because then $\mathcal{T}\subset B^0$, which would imply that $\mathcal{T}$ is contractible in $M$, thus contradicting the $\pi_1$-injectivity of $\mathcal{T}$.  Thus, we have $B^0\cap\mathcal{T}=\emptyset$, hence $B$ is contained in a connected component of $M-\mathcal{T}$.  Since $\partial B\subset M_i$, we must also have $B\subset M_i-\mathcal{T}$, thus proving the claim.  The case $M_1\cup_{\mathcal{T}}$ is identical - one only needs to consider both tori.
\end{proof}
\end{lem}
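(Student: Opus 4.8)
The plan is to exploit the irreducibility of $M$ to produce the ball, and then to show that this ball cannot be crossed by the torus $\mathcal{T}$. Since $S$ is an embedded $2$-sphere in the irreducible manifold $M$, it bounds a $3$-ball $B\subset M$. The entire task then reduces to proving that $B\cap\mathcal{T}=\emptyset$: once this is known, $B$ is a connected subset of $M-\mathcal{T}$ which contains $S\subset M_i-\mathcal{T}$, so $B$ lies in the component of $M-\mathcal{T}$ meeting $S$, i.e. $B\subset M_i-\mathcal{T}$, which is the claim.

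To see that $B$ misses $\mathcal{T}$, I would study the set $A:=B^{0}\cap\mathcal{T}$, where $B^{0}=B-\partial B$ is the open ball, and show that $A$ is simultaneously open and closed in $\mathcal{T}$. Openness is immediate, since $B^{0}$ is open in $M$. For closedness I would fix a Riemannian metric on $M$: because $S$ and $\mathcal{T}$ are disjoint compact sets, $\delta:=\inf\{\, d(x,y)\;:\;x\in S,\ y\in\mathcal{T}\,\}>0$. If $p_k\in A$ with $p_k\to p$, then $p\in B$ by compactness of $B$, the bound $d(p,S)\geq\delta>0$ forbids $p\in S$, so in fact $p\in B^{0}$, and $p\in\mathcal{T}$ because $\mathcal{T}$ is closed; hence $p\in A$. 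Since $\mathcal{T}$ is connected, $A=\emptyset$ or $A=\mathcal{T}$. The possibility $A=\mathcal{T}$ puts the whole torus inside the $3$-ball $B^{0}$, making $\mathcal{T}$ null-homotopic in $M$ and contradicting the incompressibility ($\pi_1$-injectivity) of $\mathcal{T}$. Therefore $A=\emptyset$, and since $S\cap\mathcal{T}=\emptyset$ by hypothesis, $B\cap\mathcal{T}=B^{0}\cap\mathcal{T}=\emptyset$.

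The one delicate step, and the place I expect to need the most care, is establishing closedness of $A$ in $\mathcal{T}$: one must exclude the scenario in which a sequence of interior points of $B$ lying on $\mathcal{T}$ limits onto the boundary sphere $S$, and this is precisely where the positive distance $\delta$ between the disjoint compacta $S$ and $\mathcal{T}$ is used. The non-separating case $M=M_1\cup_{\mathcal{T}}$ requires no new idea: $\mathcal{T}$ is still a single incompressible torus in $M$, the same clopen argument yields $B\cap\mathcal{T}=\emptyset$, and $B$ then lies in the component of $M-\mathcal{T}$ containing $S$, namely $M_1-\mathcal{T}$.
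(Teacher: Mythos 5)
Your proof is correct and follows essentially the same line of reasoning as the paper: produce the ball $B$ from irreducibility, show $B^{0}\cap\mathcal{T}$ is clopen in $\mathcal{T}$ using the positive distance between the disjoint compacta $S$ and $\mathcal{T}$, and rule out $B^{0}\cap\mathcal{T}=\mathcal{T}$ by incompressibility. The only (harmless) cosmetic difference is that you could dispense with the distance $\delta$ entirely: since $p\in\mathcal{T}$ and $S\cap\mathcal{T}=\emptyset$, the limit $p$ automatically avoids $S$.
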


\begin{pro}\label{B}Let $M=M_1\cup_{\mathcal{T}} M_2$, or $M=M_1\cup_{\mathcal{T}}$, where $\mathcal{T}\subset M$ is an incompressible torus.  If $M$ is irreducible, then $M_i$ is irreducible.

\begin{proof}Let $f:S\rightarrow M_i$ be an embedding of the 2-sphere into $M_i$, for $i=1$ or $2$.  If $f(S)\cap\mathcal{T}=\emptyset$, Lemma \ref{A} shows that $f(S)$ bounds a 3-ball in $M_i-\mathcal{T}$, hence also in $M_i$ thus proving the claim. 
	
Suppose, therefore, that $f(S)\cap\mathcal{T}\neq\emptyset$.  Let $N$ be a collared neighborhood of $\mathcal{T}\subset\partial M_i$ homeomorphic to $\mathcal{T}\times \left\lbrack 0,1\right\rbrack$, and let $\mathcal{T}$ be identified with $\mathcal{T}\times\left\lbrace 1\right\rbrace\subset N$ under the homeomorphism.  Will show that $f(S)$ bounds a 3-ball by first homotoping $f(S)$ to be disjoint from $\mathcal{T}\subset\partial M_i$, and then using the previous result to show that $f(S)$ is inessential.  To this end, let $F$ be the map $F((x,t),s)=(x,t(1-s/2))$, for $(x,t)\in N$, with $x\in\mathcal{T}$ and $t\in\left\lbrack 0,1\right\rbrack$, so that $F:N\times I\rightarrow N$.  Note that $F$ restricts to the identity on $\mathcal{T}\times\left\lbrace 0\right\rbrace\subset M_i^0$, hence extends to a continuous homotopy $M_i\times\left\lbrack 0,1\right\rbrack\rightarrow M_i$ defined to be the identity off $N$, which we will still denote by $F$, and note also that for this extension we have $F(\cdot,0)=id_{M_i}(\cdot)$.  Note, further, that $F(f(\cdot),0)=f(\cdot)$, and if we set $F(f(\cdot),1)=f_1(\cdot)$, then $f_1(p)=(x(p),t(p)/2)$ where $f(p)=(x(p),t(p))$ whenever $f(p)\in N$.  It is obvious that $f_1$ is still an embedding of $S$ into $M_i$ since $f_1$ is injective, as clearly $f_1(p_1)=f_1(p_2)$ is impossible in either of the two cases: $f_1(p_1), f_1(p_2)\in N$ or $f_1(p_1), f_1(p_2)\notin N$.  We also have $f_1(S)\cap\mathcal{T}=\emptyset$.  By Lemma \ref{A}, $f_1(S)$ bounds a 3-ball.  Let $\phi:B\rightarrow M_i$ be this embedding of the 3-ball into $M_i$.  Since $\phi|_{\partial B}(p)=f_1(p)=F(f(p),1)$, for $p\in S=\partial B$, the map $\phi:B\rightarrow M_i$ and the map $S\times\left\lbrack 0,1\right\rbrack\rightarrow M_i$, defined by $(p,t)\rightarrow F(f(p)),t)$, fit together to give a continuous map from $\overline{\phi}:B\cup_{\partial B\cong S\times\left\lbrace 1\right\rbrace} \left( S\times\left\lbrack 0,1\right\rbrack\right)\cong B \rightarrow M_i$.  Thus, we see that the emedded 2-sphere $f(S)=\overline{\phi}(S\times\left\lbrace 0 \right\rbrace)$ is inessential in $M_i$.  This happens if and only if $f(S)$ bounds a compact simply connected submanifold $W$ of $M_i$, see Lemma 9.1.8 in \cite{TheDraft}.  Now, since $H_1(W)=\pi_{1,ab}(W)$, we see that $H_1(W)=0$ and therefore $f(S)$ is null homologous in $W$.  By considering the 3-cycles in $W$ whose boundary is precisely $f(S)$, we see that each 3-simplex of $W$ must be included in such a 3-cycle, as we cannot have any free faces not in $f(S)$, and we further observe that $f(S)$ is the only boundary component of $W$.  Now, after attaching a 3-ball to $W$ along the boundary sphere and applying Van Kampen's Theorem, we see that we have a simply connected closed 3-manifold, which must be $\mathbb{S}^3$ by the positive solution to the Poincaré Conjecture.  Thus, we conclude that $W$ is itself a 3-ball, which concludes the proof.
\end{proof}
	
\end{pro}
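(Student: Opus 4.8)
The goal is to show that each summand $M_i$ of an irreducible torus sum along an incompressible torus $\mathcal{T}$ is itself irreducible, i.e.\ every embedded 2-sphere in $M_i$ bounds a 3-ball in $M_i$. The plan is to reduce to the case where the sphere is disjoint from the splitting torus, where Lemma~\ref{A} applies directly, and then to upgrade "bounds a compact simply connected submanifold'' to "bounds a 3-ball'' using the Poincaré Conjecture. So I would split into two cases according to whether $f(S)\cap\mathcal{T}=\emptyset$.

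In the easy case $f(S)\cap\mathcal{T}=\emptyset$: here $f(S)\subset M_i-\mathcal{T}$, so Lemma~\ref{A} immediately gives a 3-ball bounded by $f(S)$ inside $M_i-\mathcal{T}$, hence inside $M_i$, and we are done. The substantive case is $f(S)\cap\mathcal{T}\neq\emptyset$. The idea is to push $f(S)$ off $\mathcal{T}$. Regarding $\mathcal{T}$ as a boundary component of $M_i$ (the torus sum decomposition presents $\mathcal{T}$ as $\partial M_i$, so it has a collar $N\cong\mathcal{T}\times[0,1]$ with $\mathcal{T}=\mathcal{T}\times\{1\}$), I would define an explicit ambient homotopy $F:M_i\times[0,1]\to M_i$ that is the identity outside $N$ and inside $N$ compresses the collar coordinate by a factor of $1/2$, i.e.\ $F((x,t),s)=(x,t(1-s/2))$. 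This $F$ is continuous, starts at the identity, and pulls any subset of $N$ strictly away from $\mathcal{T}\times\{1\}$. Setting $f_1=F(f(\cdot),1)$, one checks $f_1$ is still an injective (hence embedded) map $S\to M_i$ — injectivity is clear separately on points mapping into $N$ and on points mapping outside $N$ — and that $f_1(S)\cap\mathcal{T}=\emptyset$. Then Lemma~\ref{A} produces an embedded 3-ball $\phi:B\to M_i$ with $\phi|_{\partial B}=f_1$.

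Now I would glue: the 3-ball $\phi:B\to M_i$ together with the "homotopy cylinder'' $S\times[0,1]\to M_i$, $(p,t)\mapsto F(f(p),t)$, match along $\partial B\cong S\times\{1\}$ and fit together to a continuous map $\overline\phi$ from $B\cup_{S\times\{1\}}(S\times[0,1])\cong B$ into $M_i$ whose restriction to $S\times\{0\}$ is exactly $f$. Hence $f(S)$ is inessential in $M_i$, and by the characterization of inessential spheres (Lemma 9.1.8 in \cite{TheDraft}) $f(S)$ bounds a compact simply connected submanifold $W\subset M_i$. The final step is to identify $W$: since $W$ is compact, connected, simply connected with a single boundary sphere, capping off $\partial W$ with a 3-ball and applying Van Kampen gives a simply connected closed 3-manifold, which is $\mathbb{S}^3$ by the Poincaré Conjecture; therefore $W$ is a 3-ball and $f(S)$ bounds a 3-ball in $M_i$.

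The main obstacle I anticipate is the bookkeeping in the middle case: verifying that the homotoped map $f_1$ is genuinely still an \emph{embedding} (not just a continuous map), and being careful that the glued object $\overline\phi$ really is defined on something homeomorphic to a ball with $\overline\phi|_{S\times\{0\}}=f$ — the homotopy must be run in the correct direction and the collar identifications kept consistent. The appeal to the Poincaré Conjecture at the end is standard once $W$ is known to be compact, simply connected, with boundary a single 2-sphere; the only mild point is arguing $W$ has no other boundary components and that the capped-off manifold is genuinely closed, which follows from the structure of the inessential-sphere-bounding region. The $M_1\cup_{\mathcal{T}}$ self-gluing case is handled identically, working with both boundary tori (equivalently, both sides of $\mathcal{T}$), exactly as in Lemma~\ref{A}.
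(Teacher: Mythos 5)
Your proof follows essentially the same route as the paper's: reduce to the disjoint case via a collar-compression ambient homotopy, apply Lemma~\ref{A}, glue the ball and the homotopy cylinder to show $f(S)$ is inessential, invoke Lemma 9.1.8 of \cite{TheDraft}, and upgrade the compact simply connected region to a 3-ball via capping off, Van Kampen, and the Poincar\'e Conjecture. The only step you gesture at rather than carry out is the verification that $W$ has no boundary components other than $f(S)$, which the paper handles with a short homology/3-cycle argument; otherwise the two proofs coincide.
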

The following result makes use of certain elementary methods from homology which can all be found in \cite{Hatcher}.  I am to grateful to my late Ph.D. adviser Peter Scott for communicating to me, via e-mail, the short homological argument showing that the boundary of a simply connected compact 3-manifold, whenever nonempty, consists of 2-spheres - Peter did this at a time when many others would not, could not have made the effort or the time.  The reader can compare this proof with the proof I gave in \cite{SahErr}, which requires the added hypothesis that the boundary of $M$ be either empty or toroidal.
\begin{pro}\label{irredOfMh}Let $M$ be a compact connected orientable 3-manifold.  Suppose that $N$ is a non-trivial subnormal subgroup of $G=\pi_1(M)$, and suppose that $N$ is contained in a finitely generated subgroup $U<G$ whose index in $G$ is infinite.  Then, $M$ is either irreducible, or it can be expressed as the connected sum of an irreducible compact manifold $M_1$ with finitely many 3-balls.  In particular, the Poincare associate $\widehat{M}$ is irreducible, and if the boundary of $M$ contains no 2-spheres, then $M$ is itself irreducible.
\begin{proof}  Consider the decomposition of $M$ into prime summands: $M\cong M_1\# M_2\#...\# M_p$, where each $M_i$ is a prime manifold.  We have $G=G_1*G_2*...*G_p$, where $G_i=\pi_1(M_i)$.  Since by Theorem 1.5 in \cite{Elkalla} $G$ must be indecomposable with respect to free products of groups, we must have $\pi_1(M_i)=\left\lbrace 1\right\rbrace$, for all but one value of $i$, hence without loss of generality, we assume that $\pi_1(M_i)=\left\lbrace 1\right\rbrace$ for all $i>1$.  Now, consider the simply connected summands $M_i$ with nonempty boundaries $\partial M_i$, if there are any.  For each such summand, from the long exact homology sequence of the pair $(M_i, \partial M_i)$, we have $H_2(M_i, \partial M_i)\rightarrow H_1(\partial M_i)\rightarrow H_1(M_i)$.  Using Lefschetz duality, we obtain $H_2(M_i,\partial M_i;\mathbb{Z})\cong H^1(M;\mathbb{Z})$, while $\pi_1(M_i)=\left\lbrace 1\right\rbrace$ implies that $H_1(M_i)=0$.  To compute $H^1(M_i;\mathbb{Z})$ we use the Universal Coefficients Theorem for cohomology; we have: $0\rightarrow Ext^1_{\mathbb{Z}}(H_0(M_i;\mathbb{Z}),\mathbb{Z})\rightarrow H^1(M_i;\mathbb{Z})\rightarrow Hom_{\mathbb{Z}}(H_1(M_i;\mathbb{Z}),\mathbb{Z})\rightarrow 0$.  Since $H_0(M_i;\mathbb{Z})=\mathbb{Z}$, $Ext^1_{\mathbb{Z}}(H_0(M_i;\mathbb{Z}), \mathbb{Z})=0$, and since $H_1(M_i)=0$, $Hom_{\mathbb{Z}}(H_1(M_i;\mathbb{Z}),\mathbb{Z})=0$.  Thus, we have $0\rightarrow H^1(M_i;\mathbb{Z})\rightarrow 0$, from where we conclude that $H^1(M_i;\mathbb{Z})=0$ and $H_2(M_i,\partial M_i;\mathbb{Z})=0$, hence finally $H_1(\partial M_i)=0$.  This means that if the orientable surface $\partial M_i$ is nonempty, it is a union of 2-spheres.  However, $M_i$ is prime and simply connected, therefore, $M_i$ must be also irreducible, hence $M_i$ is a 3-ball and $\partial M_i=\mathbb{S}^2$.  If $\partial M_i=\emptyset$, then by the Poincare Conjecture $M_i=\mathbb{S}^3$.  Now, $M$ is expressed as the connected sum of $M_1$ with 3-balls and 3-spheres.  On the other hand, $M_1$ is also prime, hence it is either equal to $\mathbb{S}^2\times\mathbb{S}^1$, which is impossible since in this case $\pi_1(M)=\pi_1(M_1)=\mathbb{Z}$ cannot satisfy the hypothesis of the proposition, or $M_1$ is irreducible and its boundary cannot contain any 2-spheres, otherwise we would conclude that $M_1$ is also a 3-ball, which is also impossible.  Now, it is clear that $\widehat{M}\cong M_1$ as every boundary 2-sphere in $M$ was the boundary of the 3-ball $M_i$.  Therefore, $\widehat{M}$ is irreducible as claimed. 
\end{proof}
\end{pro}

\end{section}

We record here an immediate corollary to Theorem 5.3 in \cite{Sah}, as corrected in \cite{SahErr}:

\begin{section}{An application to fiberings of compact 3-manifolds over $\mathbb{S}^1$}

As an application of Theorem \ref{MainThm}, I give the following result which generalizes Theorem 2.10 in \cite{Moon}, and which is now vindicated as a true statement of mathematical fact, see \cite{SahErr}.  The reader should know that this generalization was only possible after the proof of the Geometrization Theorem by Perelman in 2003.  In light of Proposition \ref{B}, this result, which first appeared in \cite{Sah} and \cite{SahErr}, now becomes:

\begin{thm}Let $M$ be a closed 3-manifold.  If $G=\pi_1(M)$ contains a finitely generated subgroup $U$ of infinite index in $G$ which contains a nontrivial, subnormal subgroup $N$ of $G$, then: (a) $M$ is irreducible, (b) if further: 
	\begin{enumerate}
		\item  $N$ has a subnormal series of length $n$ in which $n-1$ terms are assumed to be finitely generated, and
		\item either all inclusions $N_i \hookrightarrow N_{i+1}$, for $i>0$, are of finite index, or there exist (at least two) indices $i_0$ and $i_1$, $i_0\neq i_1$, $i_0,i_1>1$, such that the inclusions $N_i \hookrightarrow N_{i+1}$ are of infinite index for $i=i_0, i_1$, or $N=N_0$ is finitely generated and there exists (at least) one value of the index $i$ for which the inclusion $N_{i}\hookrightarrow N_{i+1}$ is of infinite index, and
		\item $N$ intersects nontrivially the fundamental groups of the splitting tori of some decomposition $\mathfrak{D}$ of $M$ into geometric pieces, and
		$N\cap\pi_1(X_i)\neq\mathbb{Z}$ for all geometric pieces $X_i\in\mathfrak{D}$,
	\end{enumerate}
	then, $M$ has a finite cover which is a bundle over $\mathbb{S}$ with fiber a compact surface $F$ such that $\pi_1(F)$ and $U$ are commensurable.  Further, any decomposition of $M$ along a collection of incompressible tori yields pieces which are irreducible, in particular, each $X_i\in\mathfrak{D}$ is irreducible. 
\end{thm}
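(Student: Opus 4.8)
The plan is to recognize that, apart from its last sentence, the statement is precisely Theorem 5.3 of \cite{Sah} as corrected in \cite{SahErr}, whose proof proceeds by induction along a geometric decomposition $\mathfrak{D}$ of $M$ and was complete except at one point: in the inductive step one cuts $M$ along a splitting torus into a torus sum $X_1\cup_T X_2$ or $X_1\cup_T$ and must know that a finite cover of the relevant piece fibers over $\mathbb{S}^1$ with the fiber's fundamental group commensurable to $U$, and in the presence of boundary this was not established. Theorem \ref{BoundaryCasesSettled} (via Theorem \ref{MainThm}) now supplies exactly this missing input, and Proposition \ref{B} supplies the new final sentence. So the proof is to re-run the argument of \cite{Sah}, \cite{SahErr}, inserting Theorem \ref{BoundaryCasesSettled} at the torus-sum step and appending a short induction based on Proposition \ref{B}.

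For part (a), I would argue that reducibility of $M$ is incompatible with the group-theoretic hypothesis. By the prime decomposition theorem together with the positive solution of the Poincaré Conjecture, if $M$ is reducible then $G=\pi_1(M)$ is either a nontrivial free product $A*B$ with $\{|A|,|B|\}\neq\{2,2\}$, or contains $\mathbb{Z}$ as a free factor, or is trivial or $\mathbb{Z}/2$. In each of these cases a nontrivial subnormal subgroup of $G$ is forced, by the structure theory of free products, to be of finite index in $G$; since $N\leq U$, this would make $|G:U|$ finite, contrary to hypothesis. (Alternatively one may simply cite part (a) of Theorem 5.3 in \cite{Sah}, \cite{SahErr}.)

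For part (b), I would induct on the number of pieces of $\mathfrak{D}$, equivalently — guided by the bookkeeping hypotheses (1)--(2) — on the length of the subnormal series of $N$. In the base case $M$ is geometric, hence its own only piece, $N\cap\pi_1(M)=N\neq\mathbb{Z}$, and the geometric case of the argument in \cite{Sah} (Theorem 3.9 there) produces the desired fibering with $\pi_1(F)$ commensurable to $U$. In the inductive step one selects a splitting torus $T$ of $\mathfrak{D}$ whose fundamental group $N$ meets nontrivially, writing $M=X_1\cup_T X_2$ or $M=X_1\cup_T$ with each $X_i$ a union of fewer geometric pieces; statement (A) holds for each $X_i$ by \cite{Elkalla}, \cite{Moon}, \cite{Sah} — this is exactly why (A) was isolated — and conditions (1)--(3) of Theorem \ref{BoundaryCasesSettled} follow from the hypotheses, with the finite-diameter requirement on the graph of groups $\mathcal{U}$ coming from finiteness of $\mathfrak{D}$. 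Theorem \ref{BoundaryCasesSettled} then yields a finite cover of $M$ that is a surface bundle over $\mathbb{S}^1$ with $\pi_1(F)$ commensurable to $U$, closing the induction; the remaining inclusion/index alternatives in hypothesis (2) are absorbed exactly as in \cite{Sah}.

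For the last sentence, note that the tori of $\mathfrak{D}$ are incompressible, so it suffices to prove the general statement. I would induct on the number $k$ of tori in the given collection. For $k=1$ this is Proposition \ref{B}. For $k>1$, cut $M$ along one torus $T_1$ to obtain $M_1'\cup_{T_1}M_2'$ or $M_1'\cup_{T_1}$; each summand is irreducible by Proposition \ref{B}. The remaining tori of the collection survive as disjoint embedded tori in the summands, and they remain incompressible there since, $T_1$ being incompressible, the fundamental group of each summand injects into $G$ by Bass--Serre theory; hence the inductive hypothesis applies to each summand with its inherited collection of fewer tori. In particular each $X_i\in\mathfrak{D}$ is irreducible. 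The main obstacle in the whole argument is the verification, at the inductive step of part (b), that the hypotheses of Theorem \ref{BoundaryCasesSettled} genuinely transfer from $M$ and $\mathfrak{D}$ to the chosen two-piece (or self-gluing) decomposition — in particular the translation of hypothesis (3) to the single splitting torus $T$, and the finite-diameter condition on $\mathcal{U}$ — together with the free-product group theory underlying part (a), which is where Geometrization and the Poincaré Conjecture enter.
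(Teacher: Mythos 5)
Your proposal matches the paper's proof: the paper simply cites \cite{Sah} and \cite{SahErr} for everything except the final sentence, observes that the corrigendum's outstanding gap is now closed by Theorem \ref{BoundaryCasesSettled}, and obtains the last sentence from Proposition \ref{B} together with induction — exactly the structure you describe. The additional detail you supply (the free-product sketch for part (a), the explicit induction for part (b)) is elaboration beyond what the paper records, but it follows the same route and the paper itself leaves those steps to the citations.
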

\begin{proof}The proof of the theorem is essentially contained in \cite{Sah} and \cite{SahErr}.  The corrigendum \cite{SahErr} notes that, as of the time of its publication, the theorem was contingent on the proof of a result at least as general as Theorem \ref{BoundaryCasesSettled} for compact manifolds with nonempty boundary, due to the absence of such a proof from \cite{Moon}, which I had somehow overlooked at the time.  Therefore, the only statement which needs proof is the claim about the irreducibility of the $X_i$; this, however, follows immediately from Proposition \ref{B} and induction.
\end{proof}

An easier to state special case of the preceding result is the following:

\begin{cor}Let $M$ be a compact 3-manifold with empty or toroidal boundary.  If $G=\pi_1(M)$ contains a finitely generated subgroup $U$ of infinite index in $G$ which contains a nontrivial, normal subgroup $N$ of $G$, then: (a) $M$ is irreducible, (b) if further: 
	
\begin{enumerate}
\item $N$ intersects nontrivially the fundamental groups of the splitting tori of some decomposition $\mathfrak{D}$ of $M$ into geometric pieces, and
\item $N\cap\pi_1(X_i)\neq\mathbb{Z}$ for all geometric pieces $X_i\in\mathfrak{D}$,
\end{enumerate}

then, $M$ has a finite cover which is a bundle over $\mathbb{S}$ with fiber a compact surface $F$ such that $\pi_1(F)$ and $U$ are commensurable, and each $X_i$ is an irreducible, compact, geometric 3-manifold.
\end{cor}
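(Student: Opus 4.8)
The plan is to obtain the Corollary by specializing the preceding Theorem and Theorem~\ref{BoundaryCasesSettled}, and then invoking Proposition~\ref{B} for the statement about the pieces. When $N$ is normal in $G$ the subnormal-series hypothesis of the preceding Theorem is trivially met with $n=1$, so for $\partial M=\emptyset$ the Corollary is literally that Theorem; the genuinely new content is the case $\partial M\neq\emptyset$ together with the irreducibility of the $X_i$.

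For part (a) I would argue, uniformly in the boundary, that $M$ cannot be reducible. If it were, the prime decomposition would present $G=\pi_1(M)$ as a nontrivial free product (with a free $\mathbb{Z}$ factor in case an $\mathbb{S}^2\times\mathbb{S}^1$ summand occurs), and since the infinite dihedral group has an infinite cyclic subgroup of index $2$ we may assume $G$ is not infinite dihedral. A nontrivial normal subgroup of such a free product acts freely on the associated Bass--Serre tree, hence is a free group, and the standard analysis of normal subgroups of free products shows it is either of finite index or infinitely generated in a way incompatible with being contained in a finitely generated subgroup of infinite index; either way this contradicts $N\le U$ with $|G:U|=\infty$. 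If $M$ is non-orientable one first passes to the orientable double cover, noting that $N$ lifts to a subgroup with the same properties and that reducibility lifts. This is exactly part (a) of the preceding Theorem, spelled out in \cite{Sah} and \cite{SahErr}, and the argument makes no use of the boundary.

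For part (b) I would induct on the number of pieces of the geometric decomposition $\mathfrak{D}$. In the base case $M$ is itself geometric, and hypotheses (1)--(2) reduce to saying that $U<\pi_1(M)$ is finitely generated of infinite index and contains a nontrivial normal subgroup $N\not\cong\mathbb{Z}$; statement (A), which holds for every geometric $3$-manifold (the body of results assembled in \cite{Sah}, \cite{Elkalla}, \cite{HempelJaco}), then gives a finite cover of $M$ which is a surface bundle over $\mathbb{S}^1$ with fibre $F$ and $\pi_1(F)$ commensurable with $U$. For the inductive step, pick a torus $T\in\mathfrak{D}$ and write $M=M_1\cup_T M_2$ or $M=M_1\cup_T$ according as $T$ separates or not, where $M_1,M_2$ are the smaller torus sums cut out by $T$. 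Since $N$ is normal in $G$ and, by (1), meets the fundamental group of some splitting torus, it meets a conjugate of every edge group of the Bass--Serre tree of $\mathfrak{D}$; hence so does $U\supseteq N$, which forces the graph of groups $\mathcal{U}$ of $U$ to project onto the finite underlying graph of $\mathfrak{D}$ and so to be of finite diameter. The inductive hypothesis applied to $M_1$ and $M_2$, each with strictly fewer geometric pieces, gives that statement (A) holds for them with the inherited data $U\cap\pi_1(M_j)$, $N\cap\pi_1(M_j)$; Theorem~\ref{BoundaryCasesSettled} then yields a finite cover of $M$ that is a surface bundle over $\mathbb{S}^1$ with fibre $F$ and $\pi_1(F)$ commensurable with $U$. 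Finally, by part (a) $M$ is irreducible and each torus of $\mathfrak{D}$ is incompressible, so Proposition~\ref{B} applied inductively, peeling off one splitting torus at a time, shows that each $X_i$ is irreducible; being a geometric piece of $\mathfrak{D}$, each $X_i$ is by construction a compact geometric $3$-manifold.

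The main obstacle, and the only part that is not a formal assembly, is the inductive verification that cutting $M$ along a torus $T\in\mathfrak{D}$ preserves all the hypotheses needed to invoke Theorem~\ref{BoundaryCasesSettled} on the smaller pieces: one must check that $N\cap\pi_1(M_j)$ is still \emph{nontrivial} (this is precisely where normality of $N$ in $G$ and the hypothesis that $N$ meets a splitting torus are used), that $N\cap\pi_1(X_i)\not\cong\mathbb{Z}$ is inherited, that $U\cap\pi_1(M_j)$ still has infinite index in $\pi_1(M_j)$, and that finite diameter of $\mathcal{U}$ persists under passage to $M_j$. Once these are tracked through the Bass--Serre tree of $\mathfrak{D}$, the conclusion is immediate from Theorem~\ref{MainThm} via Theorem~\ref{BoundaryCasesSettled}, together with Proposition~\ref{B} and the known behaviour of the geometric pieces.
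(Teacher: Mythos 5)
The paper does not actually prove the Corollary: it is offered as a direct specialization of the preceding Theorem, the observation being that a normal $N$ is a subnormal series of length one, which makes hypotheses (1) and (2) of the Theorem vacuous, and the paper's proof of that Theorem is in turn essentially a citation to \cite{Sah} and \cite{SahErr} together with Theorem~\ref{BoundaryCasesSettled} (to discharge the conditionality) and Proposition~\ref{B} with induction (for the irreducibility of the pieces). So your proposal is doing substantially more work than the source: you are reconstructing the induction over the geometric decomposition that lives inside \cite{Sah}, whereas the paper only points at it. That is a legitimate and more self-contained route, and your identification of Theorem~\ref{BoundaryCasesSettled} as the engine for the inductive step, with Proposition~\ref{B} peeling off one torus at a time for irreducibility, matches the intended mechanism exactly.

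There is, however, a concrete gap in your inductive step. You assert that because $N$ is normal and meets the edge group of some splitting torus, $\mathcal{U}$ ``projects onto the finite underlying graph of $\mathfrak{D}$ and so [is] of finite diameter.'' Surjecting onto a finite graph does not bound the diameter of $\mathcal{U}$: the graph of groups $\mathcal{U}$ is the quotient of the Bass--Serre tree of $\mathfrak{D}$ by $U$, which can be infinite (and of infinite diameter) even when the $G$-quotient is finite — think of $\mathbb{Z}$ acting by a single translation on $\mathbb{R}$, quotienting to a one-edge loop, versus the trivial subgroup. Theorem~\ref{BoundaryCasesSettled} takes finite diameter of $\mathcal{U}$ as an explicit hypothesis, so you must either derive it from the stated hypotheses (this is a genuine, nontrivial piece of the argument in \cite{Sah}, typically obtained from the finite generation of $U$ together with the nontriviality of $N\cap\pi_1(T)$ via Bass--Serre theory) or state it as an assumption; as written, the claim is unjustified. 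Your remaining bookkeeping — $N\cap\pi_1(M_j)$ nontrivial because it contains $N\cap\pi_1(T)$, normality of $N\cap\pi_1(M_j)$ in $\pi_1(M_j)$, inheritance of $N\cap\pi_1(X_i)\not\cong\mathbb{Z}$ — is fine, and your honesty about where the real checks lie is appropriate, but the finite-diameter point is the one you have actually skipped rather than merely flagged.
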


\end{section}

\begin{section}{Acknowledgements}
I would like to posthumously express my sincerest gratitude to the late Prof. Peter Scott for the many years of patient advice and his friendship, and to thank his son David for allowing me, on behalf of Prof. Scott's estate, to use his unpublished notes on 3-manifolds.  I would also like to acknowledge and express my heartfelt thanks for the very timely and very helpful advice Chris Mooney gave me during that brief and fateful conversation we had back in 2011, which resulted in Lemmas 3.1 and 3.2 in my very first paper published in the Journal of Groups, Complexity, Cryptography in 2015, and to Enric Ventura, who brought to my attention the immediate converse of the main result, see Proposition 3.7, in this older article.  I am also very thankful to Moon Duchin for introducing me to Chris Mooney and for giving my first paper's initial draft a first read through at a time when nothing seemed to be going according to plan for me: stuck on two thesis problems, with my time to finish my degree nearly having expired.
I would also like to thank the Elmhurst Public Library of the City of Elmhurst, Illinois for allowing me to use its quiet area to read and work in, and the Founders Memorial Library of Northern Illinois University for extending me courtesy borrowing privileges, which have been invaluable.  Finally, I would like to thank all the mathematicians who post useful bits of mathematical advice and references on mathoverflow.net.

\end{section}

\bibliographystyle{plain}

\begin{thebibliography}{23}
	\bibitem{Agol} I Agol, \textit{Criteria for virtual fibering}, J. Topol. 1 , No. 2, pp. 269–284 (2008).
	\bibitem{AFW} M Ashbrenner, S Friedl, H Wilton, \textit{3-manifold groups}, arXiv: 1205:0202v3 (2013).
	\bibitem{MandoCRiemGeom} M P do Carmo, \textit{Riemannian Geometry}, Birkhauser Boston (1992).
	\bibitem{Elkalla} H Elkalla, \textit{Subnormal subgroups in 3-manifolds groups}, J. London Math. Soc. (2) 30 (1984), no. 2, pp. 342–360.
	\bibitem{Griffiths1}H Griffiths, \textit{A covering-space approach to theorems of Greenberg in Fuchsian, Kleinian and other groups}, Communications on pure and applied mathematics, Vol. XX (1967), 365 - 399.
	\bibitem{Griffiths} H Griffiths, \textit{The fundamental group of a surface, and a theorem of Schreier}, Acta mathematica, vol. 110 (1963), pp. 1 - 17. 
	\bibitem{Hatcher3M} A Hatcher, \textit{Notes on basic 3-manifold topology}, Unpublished notes, available online at https://pi.math.cornell.edu/~hatcher/3M/3M.pdf.
	\bibitem{Hatcher} A Hatcher, \textit{Algebraic topology}, Cambridge Univ. Press (2002).
	\bibitem{HempelJaco} J Hempel, W Jaco, \textit{Fundamental Groups of 3-Manifolds which are Extensions}, The Annals of Mathematics, Second Series, Vol. 95, No. 1 (Jan., 1972), pp. 86-98.
	\bibitem{GuilleminPollack} V Guillemin, A Pollack, \textit{Differential topology}, Prentice Hall, Inc., Englewood Cliffs, New Jersey (1974).
	\bibitem{KahnMarkovic} J Kahn, V Markovic, \textit{Immersing almost geodesic surfaces in a closed hyperbolic three manifold}, Ann. of Math. 175 (2012), pp. 1127 - 1190.
	\bibitem{Leeb} B Leeb, \textit{3-manifolds with(out) metric of nonpositive curvature}, Invent. Math. 122, pp. 277–289 (1995).
	\bibitem{Moon} M Moon, \textit{A generalization of a theorem of Griffiths to 3-manifolds}, Topology and its applications 149 (2005).
	\bibitem{Przy-Wise} P Pryztycki, D Wise, \textit{Graph manifolds with boundary are virtually special}, J. of
	Topology, Vol. 7, No. 2, pp. 419 - 606 (2014).
	\bibitem{Sah} J Sahattchieve, \textit{A fibering theorem for 3-manifolds}, Journal of Groups, Complexity, Cryptology. Vol. 13 Issue 2 (2021).
	\bibitem{SahErr} J Sahattchieve, \text{Erratum to: A fibering theorem for 3-manifolds}, Journal of Groups, Complexity, Cryptology (2024).  Available online at 
	\url{https://gcc.episciences.org/page/errata}.
	\bibitem{TheDraft} G P Scott, \textit{An introduction to 3-manifolds}, unpublished notes, latest draft.
	\bibitem{8Geom} G P Scott, \textit{The geometries of 3-manifolds}, Bull. London Math. Soc. 15 (1983), pp. 401 - 487.
	\bibitem{ScottNorm} G P Scott, \textit{Normal subgroup in 3-manifold groups}, J. London Math. Soc. (2) 13  pp. 5-12 (1976).
	\bibitem{ScottWall} G P Scott, T C Wall, \textit{Topological methods in group theory}, Homological group theory (Proc. Sympos., Durham, 1977), pp. 137-203.
	\bibitem{Stallings} J Stallings, \textit{On fibering certain 3-manifolds}, Topology of 3-manifolds, and related topics; proceedings of the University of Georgia Institute, GA (1961).
	\bibitem{Svetlov} P Svetlov, \textit{Nonpositively curved graph manifolds are virtually fibered over the circle}, arXiv: 0108010v1 (2001).
	\bibitem{Thurston1} W Thurston, \textit{Three dimensional manifolds, Kleinian groups and hyperbolic geometry}, Bull. Am. Math. Soc. Vol. 6 No.3 (May, 1982), pp. 357 - 381.
	
\end{thebibliography}

\end{document}